\numberwithin{equation}{section}
\newtheorem{Theorem}{Theorem}[section]
\theoremstyle{definition}
\theoremstyle{remark}
\newtheorem{Remark}{Remark}[section]
\author{A.\,A. Kon'kov}
\address{Department of Differential Equations,
Faculty of Mechanics and Mathematics,
Mo\-s\-cow Lo\-mo\-no\-sov State University,
Vorobyovy Gory,
Moscow, 119992 Russia.
}
\email{konkov@mech.math.msu.su}
\author{A.\,E. Shishkov}
\address{
Center of Nonlinear Problems of Mathematical Physics,
RUDN University,
Mi\-klu\-k\-ho-Mak\-la\-ya str. 6,
Moscow, 117198 Russia.
}
\email{aeshkv@yahoo.com}
\email{konkov@mech.math.msu.su}
\author{M.\,D. Surnachev}
\address{
M.V.Keldysh Institute of Applied Mathematics of the Russian Academy of Sciences,
Moscow, 125047 Russia.
}
\title[On the existence of global solutions]{On the existence of global solutions of second-order quasilinear elliptic inequalities}
\keywords{Global solutions; Nonlinearity; Blow-up}
\subjclass{35B44, 35B08, 35J30, 35J70}
\date{}
\begin{document}

\begin{abstract}
We study the existence of global positive solutions of the differential inequalities 
$$
	- \operatorname{div} A (x, u, \nabla u)
	\ge
	f (u)
	\quad
	\mbox{in } {\mathbb R}^n,
$$
where $n \ge 2$ and $A$ is a Carath\'eodory function such that
$$
	(A (x, s, \zeta) - A (x, s, \xi))(\zeta - \xi) \ge 0,
$$
$$
	C_1
	|\xi|^p
	\le
	\xi
	A (x, s, \xi),
	\quad
	|A (x, s, \xi)|
	\le
	C_2
	|\xi|^{p-1},
	\quad
	C_1, C_2 > 0,
	\;
	p > 1,
$$
for almost all
$x \in {\mathbb R}^n$
and for all $s \in {\mathbb R}$ and $\zeta, \xi \in {\mathbb R}^n$. 
\end{abstract}

\maketitle

\section{Introduction}

We consider the inequality
\begin{equation}
	- \operatorname{div} A (x, u, \nabla u)
	\ge
	f (u)
	\quad
	\mbox{in } {\mathbb R}^n,
	\label{1.1}
\end{equation}
where $n \ge 2$ and $A$ is a Carath\'eodory function such that
\begin{equation}
	(A (x, s, \zeta) - A (x, s, \xi))(\zeta - \xi) \ge 0,
	\label{1.2}
\end{equation}
\begin{equation}
	C_1
	|\xi|^p
	\le
	\xi
	A (x, s, \xi),
	\quad
	|A (x, s, \xi)|
	\le
	C_2
	|\xi|^{p-1}
	\label{1.3}
\end{equation}
for almost all
$x \in {\mathbb R}^n$
and for all $s \in {\mathbb R}$ and $\zeta, \xi \in {\mathbb R}^n$. 
Here $p > 1$, $C_1 > 0$, and $C_2 > 0$ are real numbers independent of $x$, $s$, $\zeta$, and $\xi$.

The function $f$ in the right-hand side of~\eqref{1.1} is assumed to be non-negative and non-decreasing on the interval $[0, \varepsilon]$ for some $\varepsilon \in (0, \infty)$.

As is customary, by $B_r^x$ and $S_r^x$ we denote the open ball and the sphere in ${\mathbb R}^n$ of radius $r > 0$ centered at $x \in {\mathbb R}^n$. In the case of $x = 0$, let us write $B_r$ and $S_r$ instead of $B_r^0$ and $S_r^0$, respectively.

A function $u \in W_{p, loc}^1 ({\mathbb R}^n)$ is called a solution of~\eqref{1.1} if $f (u) \in L_{1, loc} ({\mathbb R}^n)$ and
$$
	\int_{
		{\mathbb R}^n
	}
	A (x, u, \nabla u)
	\nabla \varphi
	\,
	dx
	\ge
	\int_{
		{\mathbb R}^n
	}
	f (u)
	\varphi
	\,
	dx
$$
for any non-negative function $\varphi \in C_0^\infty ({\mathbb R}^n)$.

We additionally assume that solutions of~\eqref{1.1} satisfy the condition
\begin{equation}
	\operatorname*{ess\,inf}\limits_{
		{\mathbb R}^n
	}
	u
	=
	0
	\label{1.6}
\end{equation}
This obviously implies that the function $u$ is non-negative almost everywhere in ${\mathbb R}^n$.

\pagebreak

Analogously, by a solution of the equation
$$
	- \operatorname{div} A (x, u, \nabla u)
	=
	g (x)
	\quad
	\mbox{in } {\mathbb R}^n,
$$
where $g \in L_{1, loc} ({\mathbb R}^n)$, we mean a function $u \in W_{p, loc}^1 ({\mathbb R}^n)$ such that
$$
	\int_{
		{\mathbb R}^n
	}
	A (x, u, \nabla u)
	\nabla \varphi
	\,
	dx
	=
	\int_{
		{\mathbb R}^n
	}
	g (x)
	\varphi
	\,
	dx
$$
for all $\varphi \in C_0^\infty ({\mathbb R}^n)$.

The absence of non-trivial solutions of differential equations and inequalities or, in other words, the blow-up phenomenon is traditionally attracted the attention of mathematicians~[1--13].
It is easy to see that for inequalities~\eqref{1.1} the only relevant case is $n > p$. 
Indeed, if $p \ge n$ and $u$ is a non-negative solution of the inequality
$$
	- \operatorname{div} A (x, u, \nabla u)
	\ge
	0
	\quad
	\mbox{in } {\mathbb R}^n,
$$
then $u$ is a constant function~\cite{MPbook}.
In paper~\cite{AGQ}, for the differential inequalities
\begin{equation}
	- \operatorname{div} (|\nabla u|^{p-2} \nabla u) \ge f (u)
	\quad
	\mbox{in } {\mathbb R}^n
	\label{1.4}
\end{equation}
which are a partial case of~\eqref{1.1}, it was shown that, in the case of
\begin{equation}
	\int_0^\varepsilon
	\frac{
		f (t)
		\,
		dt
	}{
		t^{1 + n (p - 1) / (n - p)}
	}
	=
	\infty,
	\label{1.5}
\end{equation}
any solution satisfying condition~\eqref{1.6}
is trivial, i.e. is equal to zero almost everywhere in ${\mathbb R}^n$.
In so doing, if~\eqref{1.5} is not fulfilled, a positive solution of~\eqref{1.4} was constructed that tends to zero as $x \to \infty$. 

A shortcoming of~\cite{AGQ} is that the arguments presented there make significant use of the spherical symmetry of the differential operator. The method proposed by the authors of~\cite{AGQ} is based on the reduction of arbitrary solutions of~\eqref{1.4} to spherically symmetric ones. This method is obviously not applicable to inequalities of the general form~\eqref{1.1}.
In~\cite{KSMZ}, by a completely different approach, it was shown that~\eqref{1.5} is a sufficient condition for the absence of nontrivial solutions in the case of inequalities~\eqref{1.1} as well. However, the question of whether this condition is necessary remained open. Theorem~\ref{T2.1} given below provides an answer to this question.

\section{Main results}

\begin{Theorem}\label{T2.1}
Let $n > p$ and, moreover,
\begin{equation}
	\int_0^\varepsilon
	\frac{
		f (t)
		\,
		dt
	}{
		t^{1 + n (p - 1) / (n - p)}
	}
	<
	\infty.
	\label{T2.1.1}
\end{equation}
Then inequality~\eqref{1.1} has a positive solution satisfying condition~\eqref{1.6}.
\end{Theorem}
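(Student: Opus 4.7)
The plan is to construct the positive solution as the limit of weak solutions of auxiliary Dirichlet problems on balls of growing radius, with a prescribed radial source. The decisive tool is the Kilpel\"ainen--Mal\'y pointwise potential estimate, which allows the approximants to be dominated by a carefully chosen radial majorant and serves as the replacement for the radial symmetrization of~\cite{AGQ}, unavailable for general $A$.

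First, fix a large parameter $\mu > 0$ (to be chosen below in terms of $C_1, C_2, n, p$) and some $M \in (0, \varepsilon)$ small, and construct a strictly decreasing radial $C^1$ profile $\phi\colon [0, \infty) \to (0, M]$ with $\phi(0) = M$ and $\phi(r) \to 0$ as $r \to \infty$, satisfying $-\operatorname{div}(|\nabla \phi|^{p-2}\nabla \phi) = \mu f(\phi)$ on ${\mathbb R}^n$. For any $\mu > 0$, the existence of such $\phi$ under~\eqref{T2.1.1} follows by the radial ODE construction of~\cite{AGQ}: integrating the radial form $r^{1-n}(r^{n-1}|\phi'|^{p-1})' = \mu f(\phi)$ and substituting $t = \phi(r)$, one sees that $\phi(r) \to 0$ at infinity is equivalent to $\int_0^\infty r^{n-1} f(\phi(r))\, dr < \infty$, which is precisely~\eqref{T2.1.1}. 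The classical two-sided Kilpel\"ainen--Mal\'y comparison between a nonnegative $p$-superharmonic function and its Wolff potential then furnishes
\begin{equation*}
    \mathbf{W}_{1,p}^{f(\phi)}(x, \infty) \leq \kappa\, \mu^{-1/(p-1)}\, \phi(|x|), \qquad x \in {\mathbb R}^n,
\end{equation*}
with $\kappa = \kappa(n, p)$, where $\mathbf{W}_{1,p}^g(x, \rho) := \int_0^\rho \bigl( r^{p-n}\int_{B_r(x)} g(y)\, dy \bigr)^{1/(p-1)} dr/r$.

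Next, for each $R > 1$, solve the auxiliary Dirichlet problem
\begin{equation*}
    -\operatorname{div} A(x, v_R, \nabla v_R) = f(\phi(|x|)) \quad \mbox{in } B_R, \qquad v_R = 0 \quad \mbox{on } \partial B_R,
\end{equation*}
with existence of a nonnegative $v_R \in W_0^{1,p}(B_R)$ by pseudomonotone operator theory based on~\eqref{1.2}--\eqref{1.3}. The upper Kilpel\"ainen--Mal\'y estimate, which holds for every operator obeying~\eqref{1.2}--\eqref{1.3}, yields $v_R(x) \leq C_\ast \mathbf{W}_{1,p}^{f(\phi)}(x, 2R) \leq C_\ast \kappa\, \mu^{-1/(p-1)} \phi(|x|)$ with $C_\ast = C_\ast(n, p, C_1, C_2)$. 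Choosing $\mu$ so large that $C_\ast \kappa\, \mu^{-1/(p-1)} \leq 1$ forces $v_R \leq \phi$ in $B_R$; monotonicity of $f$ on $[0, \varepsilon]$ then gives $f(v_R) \leq f(\phi)$, so $-\operatorname{div} A(x, v_R, \nabla v_R) = f(\phi(|x|)) \geq f(v_R)$ in $B_R$, i.e.\ $v_R$ satisfies~\eqref{1.1} weakly on $B_R$.

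Finally, DiBenedetto--Tolksdorf interior gradient estimates and the uniform bound $v_R \leq M$ permit extracting a subsequence $v_{R_k} \to u$ strongly in $W^{1,p}_{\mathrm{loc}}({\mathbb R}^n)$ and a.e.; a Minty-type monotonicity argument based on~\eqref{1.2} identifies the limit of $A(x, v_{R_k}, \nabla v_{R_k})$ with $A(x, u, \nabla u)$ in the weak formulation, so $u$ satisfies~\eqref{1.1} weakly on ${\mathbb R}^n$, and $0 \leq u \leq \phi$ with $\phi(|x|) \to 0$ yields~\eqref{1.6} automatically. The complementary lower Kilpel\"ainen--Mal\'y estimate forces $v_R(0) \geq c_1 > 0$ uniformly in $R$, hence $u \not\equiv 0$, and the strong maximum principle for operators with structure~\eqref{1.2}--\eqref{1.3} then yields strict positivity. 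The main obstacle is the quantitative calibration in the third step: the multiplier $\mu$ must be chosen so large that the Wolff potential of $f(\phi)$ is pointwise dominated by $\phi$ with constant at most $C_\ast^{-1}$. This is exactly where~\eqref{T2.1.1} plays its role --- it is the condition under which the AGQ radial profile admits a sharp two-sided Wolff comparison of the required form, consistent with the non-existence result of~\cite{KSMZ} when~\eqref{T2.1.1} fails.
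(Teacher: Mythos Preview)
Your route is sound and genuinely different from the paper's. Both arguments hinge on the same mechanism --- solve an \emph{equation} with a fixed radial right-hand side $F(|x|)=f(\text{profile}(|x|))$, use the Kilpel\"ainen--Mal\'y upper Wolff bound to force the solution below the profile, and close via the monotonicity of $f$ --- but the implementations diverge. The paper takes as profile the explicit function $\varepsilon(1+r/\delta)^{-(n-p)/(p-1)}$ and proves global solvability of $-\operatorname{div}A(x,u,\nabla u)=F(|x|)$ on ${\mathbb R}^n$ directly, via a Galerkin/Minty surjectivity argument for the operator ${\mathcal A}:V\to V^*$ on the homogeneous space $V=\{u\in L^1_p({\mathbb R}^n):\int|u|^p|x|^{-p}\,dx<\infty\}$ (this is the paper's Theorem~2.2); the calibration is done by sending $\delta\to 0$. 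You instead import the AGQ radial supersolution $\phi$ of $-\Delta_p\phi=\mu f(\phi)$, solve Dirichlet problems on $B_R$, and pass to the limit; your calibration parameter is the multiplier $\mu$, and your elegant extra idea is to read the comparison $\mathbf W^{f(\phi)}_{1,p}\le \kappa\mu^{-1/(p-1)}\phi$ off the \emph{lower} KM bound for the $p$-Laplacian. The paper's version is more self-contained (no appeal to~\cite{AGQ}, no limit in $R$), while yours is more modular and makes the role of the Wolff potential very transparent.

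Two technical points to tighten. First, DiBenedetto--Tolksdorf $C^{1,\alpha}$ estimates require differentiability of $A$ in $\xi$, which is not assumed in~\eqref{1.2}--\eqref{1.3}; you do not need them, since uniform local energy bounds (Caccioppoli, using $0\le v_R\le \phi$) already give weak $W^{1,p}_{\mathrm{loc}}$ compactness, a.e.\ convergence, and then the very Minty argument you mention (carried out in the paper for the Galerkin sequence) identifies the limit. Second, your positivity step implicitly assumes $f(\phi)\not\equiv 0$; if $f$ vanishes on an initial interval you should first replace $f$ by $\tilde f(t)=\max\{f(t),t^{1+n(p-1)/(n-p)}\}$, exactly as the paper does, so that the source is strictly positive and the weak Harnack inequality (rather than a strong maximum principle, which again needs more structure on $A$) yields $u>0$ a.e.
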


%To prove Theorem~\ref{T2.1}, we need the following statement.

\begin{Theorem}\label{T2.2}
Let $n > p$ and, moreover, $F : [0, \infty) \to [0, \infty)$ be a non-increasing function such that
\begin{equation}
	\int_0^\infty
	r^{n (p - 1) / p} F (r)
	\,
	dr
	<
	\infty.
	\label{T2.2.1}
\end{equation}
Then the equation
\begin{equation}
	- \operatorname{div} A (x, u, \nabla u)
	=
	F (|x|)
	\quad
	\mbox{in } {\mathbb R}^n
	\label{T2.2.2}
\end{equation}
has a solution satisfying condition~\eqref{1.6}.
\end{Theorem}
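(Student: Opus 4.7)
The plan is to produce $u$ as the limit of solutions $u_R$ of Dirichlet problems on expanding balls, while controlling each $u_R$ pointwise by a fixed radial majorant $V(|x|)$ that vanishes at infinity. The natural candidate is the radial primitive associated with the model $p$-Laplacian right-hand side $F(|x|)$,
\[
V(r)\;=\;C_0\int_r^\infty\!\left(\rho^{1-n}\!\int_0^\rho s^{n-1} F(s)\,ds\right)^{\!\!\frac{1}{p-1}}\,d\rho,
\]
with $C_0=C_0(n,p,C_1,C_2)$. Using that $F$ is non-increasing and that \eqref{T2.2.1} holds, a short Hardy-type computation gives $F(\rho)\le C\rho^{-1-\alpha}\int_0^\infty s^\alpha F(s)\,ds$ with $\alpha=n(p-1)/p$; the inner integral then grows at most like $\rho^{n/p-1}$, and the integrand in the display is of order $\rho^{-n/p}$ at infinity, integrable precisely because $n>p$. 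Thus $V(r)<\infty$ for $r>0$ and $V(r)\to 0$ as $r\to\infty$.

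For each $R>0$, standard Leray-Lions / pseudo-monotone operator theory furnishes a non-negative solution $u_R\in W_{p,0}^1(B_R)$ of
\[
-\operatorname{div} A(x,u_R,\nabla u_R)=F(|x|)\;\text{in}\;B_R,\qquad u_R|_{\partial B_R}=0,
\]
non-negativity following from testing against $-\min(u_R,0)$ and using \eqref{1.3}. The crux is then the $R$-independent pointwise bound $u_R(x)\le V(|x|)$ in $B_R$.

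This pointwise majorization is the main obstacle. The standard weak comparison between $u_R$ and the radial solution of $-\Delta_p V=F$ is unavailable because $A$ carries only a Carath\'eodory dependence on its middle slot, with no monotonicity in $s$. I would work around this by freezing that slot \emph{after} $u_R$ has been constructed: set $\widetilde A(x,\xi):=A(x,u_R(x),\xi)$, which satisfies \eqref{1.2}--\eqref{1.3} with the same constants and is now $s$-independent, and apply the Kilpel\"ainen--Mal\'y pointwise Wolff-potential upper bound to this linearised operator,
\[
u_R(x_0)\;\le\;C\inf_{B_r(x_0)}u_R\;+\;C\!\int_0^{2r}\!\!\left(\frac{1}{\rho^{n-p}}\!\int_{B_\rho(x_0)}\!\!\!F(|y|)\,dy\right)^{\!\!\frac{1}{p-1}}\!\frac{d\rho}{\rho},
\]
valid whenever $B_{4r}(x_0)\subset B_R$. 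Choosing $r$ up to $\tfrac{1}{4}\dist(x_0,\partial B_R)$ and iterating along a chain of balls reaching $\partial B_R$ forces the infimum term to $0$ via the Dirichlet data, and a radial rearrangement of the remaining Wolff potential (splitting at $|y|=|x_0|/2$, the interior piece being controlled by $F(|x_0|/2)$ and the exterior piece by $\int_0^{3\rho}s^{n-1}F(s)\,ds$) shows it is bounded by $V(|x_0|)$, uniformly in $R$.

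With $0\le u_R\le V$ established and $V\in L^\infty_{\mathrm{loc}}(\mathbb R^n\setminus\{0\})$, local Caccioppoli estimates yield uniform $W_{p,\mathrm{loc}}^1$ bounds on $\{u_R\}$. One then extracts a subsequence $u_{R_k}\to u$ a.e.\ and weakly in $W_{p,\mathrm{loc}}^1$ and invokes the Boccardo--Murat argument for a.e.\ convergence of $\nabla u_{R_k}$ (a Minty-type exploitation of the monotonicity \eqref{1.2}) to pass to the limit in $A(x,u_{R_k},\nabla u_{R_k})$; the limit $u$ then satisfies \eqref{T2.2.2}, and the bound $u\le V$ together with $V(|x|)\to 0$ yields \eqref{1.6}.
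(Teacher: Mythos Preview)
Your exhaustion-by-balls strategy is genuinely different from the paper's, which never approximates: the paper works directly on the whole space in the Hardy-weighted space
\[
V=\Big\{u\in L^1_p(\mathbb R^n):\int_{\mathbb R^n}\frac{|u|^p}{|x|^p}\,dx<\infty\Big\},
\]
shows that $v\mapsto\int F(|x|)v$ is a bounded functional on $V$ via a dyadic estimate turning \eqref{T2.2.1} into $\int|x|^{p'}F(|x|)^{p'}dx<\infty$, and then proves that $\mathcal A:V\to V^*$ is surjective by a Galerkin scheme and a Minty-type argument modified to absorb the $s$-dependence of $A$. Condition \eqref{1.6} then drops out automatically from membership in $V$. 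No Wolff potentials, no Boccardo--Murat, no local problems.

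Your scheme is viable in principle, but there is a real gap at the step ``iterating along a chain of balls reaching $\partial B_R$ forces the infimum term to $0$.'' The Kilpel\"ainen--Mal\'y constant $C$ in
\[
u_R(x_0)\le C\inf_{B_r(x_0)}u_R+C\,W^{F}_{1,p}(x_0,2r)
\]
is larger than $1$, so chaining $N$ applications produces a factor $C^{N}$ in front of the residual infimum, and nothing in the boundary behaviour of $u_R$ guarantees this is beaten by the decay of $u_R$ near $\partial B_R$. (The zero extension of $u_R$ is not an $\tilde A$-supersolution on $\mathbb R^n$ with Riesz measure bounded above by $F(|x|)\,dx$ either, so you cannot simply send $r\to\infty$ in the local estimate.) The clean repair is to drop the pointwise bound as the source of compactness and instead test the $B_R$-equation against $u_R$ itself: Hardy's inequality for $u_R\in W^{1,p}_0(B_R)$ together with the dyadic bound $\big(\int|x|^{p'}F^{p'}\big)^{1/p'}\le C\int_0^\infty r^{n(p-1)/p}F(r)\,dr$ yields $\|\nabla u_R\|_{L^p(\mathbb R^n)}\le C$ uniformly in $R$, after which your weak-limit/Boccardo--Murat passage goes through. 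Note that this fix is precisely the paper's key estimate, so your route and the paper's converge at that point; what your approach adds is that, once the global solution $u$ with $\operatorname*{ess\,inf} u=0$ is in hand, the global Kilpel\"ainen--Mal\'y bound \cite[Cor.~4.13]{KM} applies (now legitimately, with the frozen operator $\tilde A(x,\xi)=A(x,u(x),\xi)$) and delivers the quantitative decay $u(x)\le CW^{F}_{1,p}(x,\infty)$ --- which the paper only obtains later, in the proof of Theorem~\ref{T2.1}.
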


\begin{Remark}\label{R2.1}
If, under the assumptions of Theorem~\ref{T2.2}, the function $F$ does not vanish almost everywhere on the interval $[0, \infty)$, then any non-negative solution of~\eqref{T2.2.2} is positive almost everywhere in ${\mathbb R}^n$. Indeed, by the weak Harnack inequality, we obtain
$$
	\left(
		\frac{
			1
		}{
			\operatorname{mes} B_{2 r}
		}
		\int_{
			B_{2 r}
		}
		u^\lambda
		\,
		dx
	\right)^{1 / \lambda}
	\le
	C
	\operatorname*{ess\,inf}\limits_{
		B_r
	}
	u
$$
for all $\lambda \in (0, n (p - 1) / (n - p))$ and $r \in (0, \infty)$~\cite{Serrin, Trudinger, TW}.
\end{Remark}

\begin{proof}[Proof of Theorem~\ref{T2.2}]
Let us agree to denote by $C$ various positive constants that can depend only on $p$, $n$, and the ellipticity constants $C_1$ and $C_2$.
As in~\cite{Mazya}, by $L_p^1 ({\mathbb R}^n)$ we mean the set of distributions $u \in {\mathcal D}' ({\mathbb R}^n)$ such that
$\nabla u \in L_p ({\mathbb R}^n)$.
The set $L_p^1 ({\mathbb R}^n)$ is obviously a linear space with the seminorm
\begin{equation}
	\| u \|_{
		L_p^1 ({\mathbb R}^n)
	}
	=
	\left(
		\int_{
			{\mathbb R}^n
		}
		|\nabla u|^p
		\,
		dx
	\right)^{1 / p}.
	\label{PT2.2.1}
\end{equation}
It is well-known that
$L_p^1 ({\mathbb R}^n) \subset L_{p, loc} ({\mathbb R}^n)$~\cite[Sec. 1.1.2]{Mazya}.
By $V$ we denote the space of functions $u \in L_p^1 ({\mathbb R}^n)$ satisfying the condition
\begin{equation}
	\int_{
		{\mathbb R}^n
	}
	\frac{
		|u|^p
	}{
		|x|^p
	}
	\,
	dx
	<
	\infty.
	\label{PT2.2.2}
\end{equation}
It is easy to see that~\eqref{PT2.2.1} is a norm on $V$.
Indeed, let $\eta \in C_0^\infty (B_2)$ be a non-negative function equal to one on $B_1$. Assuming that $\eta$ is extended by zero to the set ${\mathbb R}^n \setminus B_2$, we put
$$
	\eta_R (x) = \eta
	\left(
		\frac{x}{R}
	\right),
	\quad
	x \in {\mathbb R}^n, 
	\quad
	R > 0.
$$
Let $u \in V$. From Hardy's inequality~\cite[ Sec. 2.1.6, Corollary 2]{Mazya}, it follows that
$$
	\int_{
		{\mathbb R}^n
	}
	\frac{
		|\eta_R u|^p
	}{
		|x|^p
	}
	\,
	dx
	\le
	C
	\int_{
		{\mathbb R}^n
	}
	|\nabla (\eta_R u)|^p
	\,
	dx,
$$
whence we immediately obtain
$$
	\int_{
		B_R
	}
	\frac{
		|u|^p
	}{
		|x|^p
	}
	\,
	dx
	\le
	C
	\left(
		\int_{
			B_{2 R}
		}
		|\nabla u|^p
		\,
		dx
		+
		\int_{
			B_{2 R} \setminus B_R
		}
		\frac{|u|^p}{R^p}
		\,
		dx
	\right)
$$
for all $R > 0$.
In view of~\eqref{PT2.2.2}, the second summand in the right-hand side of the last inequality tends to zero as $R \to \infty$.
Thus, passing to the limit as $R \to \infty$, we have
\begin{equation}
	\int_{
		{\mathbb R}^n
	}
	\frac{
		|u|^p
	}{
		|x|^p
	}
	\,
	dx
	\le
	C
	\int_{
		{\mathbb R}^n
	}
	|\nabla u|^p
	\,
	dx.
	\label{PT2.2.3}
\end{equation}
With respect to the norm~\eqref{PT2.2.1} the space $V$ is obviously complete.
In so doing, the mapping $u \mapsto \nabla u$ isometrically embeds $V$ into
$$
	(L_p ({\mathbb R}^n))^n
	=
	\underbrace{L_p ({\mathbb R}^n) \times \ldots \times L_p ({\mathbb R}^n)}_{n}.
$$
Since $(L_p ({\mathbb R}^n))^n$ is a separable uniformly convex Banach space, $V$ is also separable and uniformly convex, so $V$ is reflexive by Milman's theorem~\cite[Chapter 5, Sec. 2, Theorem 2]{Yosida}.
This, in particular, implies the separability of $V^*$.

We consider the operator ${\mathcal A} : V \to V^*$ defined by
$$
	({\mathcal A} u, v)
	=
	\int_{
		{\mathbb R}^n
	}
	A (x, u, \nabla u) \nabla v
	\,
	dx,
	\quad
	u, v \in V.
$$
From~\eqref{1.3}, it follows that
\begin{equation}
	\lim_{
		\| u \|_{
			L_p^1 ({\mathbb R}^n)
		}
		\to 
		\infty
	}
	\frac{
		({\mathcal A} u, u)
	}{
		\| u \|_{
			L_p^1 ({\mathbb R}^n)
		}
	}
	=
	\infty.
	\label{PT2.2.4}
\end{equation}
Moreover, ${\mathcal A}$ is a bounded operator, in other words, it maps any bounded set in $V$ to a set bounded in $V^*$.
It is also easy to verify that for all $v \in V$ the mapping
\begin{equation}
	u \mapsto ({\mathcal A} u, v)
	\label{PT2.2.14}
\end{equation}
is a continuous functional on any finite-dimensional subspace of $V$.
Indeed, let $W$ be an $m$-dimensional subspace of $V$ with the basis $w_i$, $i = 1, \ldots, m$, and let $v \in V$ be some function.
Since any two norms on $W$ are equivalent, it suffices to show that the mapping
$$
	(\lambda_1, \ldots, \lambda_m)
	\mapsto
	({\mathcal A} (\lambda_1 w_1 + \ldots + \lambda_m w_m), v) 
$$
is continuous in ${\mathbb R}^m$. Let $(\lambda_{1 k}, \ldots, \lambda_{m k}) \to (\lambda_1, \ldots, \lambda_m)$ as $k \to \infty$.
The function $A$ on the left in~\eqref{1.1} is Carath\'eodoric. This means that the mapping
$
	x \mapsto A (x, s, \xi)
$
is measurable for all $s \in {\mathbb R}$ and $\xi \in {\mathbb R}^n$ and the mapping
$
	(s, \xi) \mapsto A (x, s, \xi)
$ 
is continuous for almost all $x \in {\mathbb R}^n$. Hence,
$$
	\lim_{k \to \infty}
	A \left(
		x, 
		\sum_{i = 1}^m
		\lambda_{i k} w_i,
		\sum_{i = 1}^m
		\lambda_{i k} \nabla w_i
	\right) 
	\nabla v
	=
	A \left(
		x, 
		\sum_{i = 1}^m
		\lambda_i w_i,
		\sum_{i = 1}^m
		\lambda_i \nabla w_i
	\right) 
	\nabla v
$$
for almost all $x \in {\mathbb R}^n$.
At the same time, taking into account~\eqref{1.3}, we have
$$
	\left|
		A \left(
			x, 
			\sum_{i = 1}^m
			\lambda_{i k} w_i,
			\sum_{i = 1}^m
			\lambda_{i k} \nabla w_i
		\right) 
		\nabla v
	\right|
	\le
	C
	\left(
			\sum_{i = 1}^m
			|\lambda_{i k}| |\nabla w_i|
			+
			|\nabla v| 
	\right)^p,
	\quad
	k = 1,2,\ldots,
$$
for almost all $x \in {\mathbb R}^n$.
Thus, from Lebesgue's dominated convergence theorem, it follows that
$$
	\lim_{k \to \infty}
	\int_{
		{\mathbb R}^n
	}
	A \left(
		x, 
		\sum_{i = 1}^m
		\lambda_{i k} w_i,
		\sum_{i = 1}^m
		\lambda_{i k} \nabla w_i
	\right) 
	\nabla v
	\,
	dx
	=
	\int_{
		{\mathbb R}^n
	}
	A \left(
		x, 
		\sum_{i = 1}^m
		\lambda_i w_i,
		\sum_{i = 1}^m
		\lambda_i \nabla w_i
	\right) 
	\nabla v
	\,
	dx
$$
or, in other words,
$$
	\lim_{k \to \infty}
	({\mathcal A} (\lambda_{1 k} w_1 + \ldots + \lambda_{m k} w_m), v) 
	=
	({\mathcal A} (\lambda_1 w_1 + \ldots + \lambda_m w_m), v).
$$

Unfortunately, we cannot claim that ${\mathcal A}$ is a monotone operator, i.e. such an operator that
$$
	({\mathcal A} u - {\mathcal A} v, u - v) \ge 0
$$
for all $u, v \in V$. 
Nevertheless, by arguments similar to~\cite[Chapter~2, Theorem~2.1]{Lions}, it is easy to show that ${\mathcal A} : V \to V^*$ is a surjective mapping.
To avoid being unfounded, we present these arguments in full.

Let ${\mathcal F}$ be a continuous linear functional on $V$. We show that
\begin{equation}
	{\mathcal A} u = {\mathcal F}.
	\label{PT2.2.9}
\end{equation}
for some $u \in V$. We take a family of linearly independent functions $w_i \in V$, $i = 1,2,\ldots$, such that the closure of their linear span coincides with $V$. We denote by $V_m$ the linear subspace of $V$ generated by the functions $w_i$, $i = 1, \ldots, m$, with the scalar product
$$	
	\langle w_i, w_j \rangle
	=
	\delta_{ij},
$$
where $\delta_{ij}$ is the Kronecker delta.
There obviously exist a function ${\mathbb F} \in V_m$ and an operator ${\mathbb A} : V_m \to V_m$ such that
$$
	\langle {\mathbb A} u, v \rangle
	=
	({\mathcal A} u, v )
	\quad
	\mbox{and}
	\quad
	\langle {\mathbb F}, v \rangle
	=
	({\mathcal F}, v)
$$
for all $u, v \in V_m$.
Since
$$
	{\mathbb A} u
	=
	\sum_{i=1}^m
	({\mathcal A} u, w_i)
	w_i
$$
for all $u \in V_m$ and, moreover, the mappings $u \mapsto ({\mathcal A} u, w_i)$, $i = 1, \ldots, m$, are continuous on the finite-dimensional space $V_m$, the operator ${\mathbb A} : V_m \to V_m$ is continuous.
In view of~\eqref{PT2.2.4}, there exists a real number $R > 0$ such that
$$
	\langle {\mathbb A} u - {\mathbb F}, u \rangle
	\ge
	0
$$
for all $u \in V_m$ satisfying the condition $\langle u, u \rangle = R^2$.
Therefore, by~\cite[Chapter~1, Lemma~4.3]{Lions}, there exists $u_m \in V_m$ such that
$$
	{\mathbb A} u_m = {\mathbb F}
$$
or, in other words,
\begin{equation}
	({\mathcal A} u_m, v)
	=
	({\mathcal F}, v)
	\label{PT2.2.5}
\end{equation}
for all $v \in V_m$.
In particular,
\begin{equation}
	({\mathcal A} u_m, u_m)
	=
	({\mathcal F}, u_m),
	\label{PT2.2.6}
\end{equation}
whence in accordance with~\eqref{PT2.2.4} it follows that the sequence $\{ u_m \}_{m=1}^\infty$ is bounded in $V$. 
Since ${\mathcal A} : V \to V^*$ is a bounded operator, the sequence $\{ {\mathcal A} u_m \}_{m=1}^\infty$ is bounded in $V^*$.
Let us extract from $\{ u_m \}_{m=1}^\infty$ a subsequence that converges weakly in $V$, strongly in $L_{p, loc} ({\mathbb R}^n)$, and almost everywhere in ${\mathbb R}^n$ to some function $u \in V$. Such a subsequence obviously exists. In order to avoid cluttering the indices, we also denote it by $\{ u_m \}_{m=1}^\infty$.
Passing to the limit in~\eqref{PT2.2.6}, we obtain
\begin{equation}
	\lim_{m \to \infty}
	({\mathcal A} u_m, u_m)
	=
	({\mathcal F}, u).
	\label{PT2.2.7}
\end{equation}
In its turn, from~\eqref{PT2.2.5}, it follows that
$$
	\lim_{m \to \infty}
	({\mathcal A} u_m, w_i)
	=
	({\mathcal F}, w_i)
$$
for all $i = 1,2,\ldots$. 
Since the linear span of the functions $w_i$, $i = 1,2,\ldots$, is dense in $V$ and the sequence $\{ {\mathcal A} u_m \}_{m=1}^\infty$ is bounded in $V^*$, this yields
\begin{equation}
	\lim_{m \to \infty}
	({\mathcal A} u_m, v)
	=
	({\mathcal F}, v)
	\label{PT2.2.8}
\end{equation}
for all $v \in V$.

Let $v \in V$ be some function. Taking into account~\eqref{PT2.2.7}, \eqref{PT2.2.8}, and the fact that the sequence $\{ u_m \}_{m=1}^\infty$ weakly converges to $u$, we have
\begin{equation}
	\lim_{m \to \infty}
	({\mathcal A} u_m - {\mathcal A} v, u_m - v)
	=
	({\mathcal F} - {\mathcal A} v, u - v).
	\label{PT2.2.10}
\end{equation}
At the same time, from~\eqref{1.2}, it follows that
\begin{align}
	({\mathcal A} u_m - {\mathcal A} v, u_m - v)
	=
	{}
	&
	\int_{
		{\mathbb R}^n
	}
	(A (x, u_m, \nabla u_m) - A (x, v, \nabla v))
	(\nabla u_m - \nabla v)
	\,
	dx
	\nonumber
	\\
	=
	{}
	&
	\int_{
		{\mathbb R}^n
	}
	(A (x, u_m, \nabla u_m) - A (x, u_m, \nabla v))
	(\nabla u_m - \nabla v)
	\,
	dx
	\nonumber
	\\
	&
	{}
	+
	\int_{
		{\mathbb R}^n
	}
	(A (x, u_m, \nabla v) - A (x, v, \nabla v))
	(\nabla u_m - \nabla v)
	\,
	dx
	\nonumber
	\\
	{}
	\ge
	&
	\int_{
		{\mathbb R}^n
	}
	(A (x, u_m, \nabla v) - A (x, v, \nabla v))
	(\nabla u_m - \nabla v)
	\,
	dx.
	\label{PT2.2.11}
\end{align}
In so doing, the relation
\begin{align}
	&
	\lim_{m \to \infty}
	\int_{
		{\mathbb R}^n
	}
	(A (x, u_m, \nabla v) - A (x, v, \nabla v))
	(\nabla u_m - \nabla v)
	\,
	dx
	\nonumber
	\\
	&
	\quad
	{}
	=
	\int_{
		{\mathbb R}^n
	}
	(A (x, u, \nabla v) - A (x, v, \nabla v))
	(\nabla u - \nabla v)
	\,
	dx
	\label{PT2.2.12}
\end{align}
is valid.
To prove~\eqref{PT2.2.12}, it suffices to note that
\begin{align*}
	&
	\int_{
		{\mathbb R}^n
	}
	(A (x, u_m, \nabla v) - A (x, v, \nabla v))
	(\nabla u_m - \nabla v)
	\,
	dx
	\\
	&
	\quad
	{}
	=
	\int_{
		{\mathbb R}^n
	}
	(A (x, u, \nabla v) - A (x, v, \nabla v))
	(\nabla u - \nabla v)
	\,
	dx
	\\
	&
	\quad
	\phantom{{}=}
	{}
	+
	\int_{
		{\mathbb R}^n
	}
	(A (x, u, \nabla v) - A (x, v, \nabla v))
	(\nabla u_m - \nabla u)
	\,
	dx
	\\
	&
	\quad
	\phantom{{}=}
	{}
	+
	\int_{
		{\mathbb R}^n
	}
	(A (x, u_m, \nabla v) - A (x, u, \nabla v))
	(\nabla u_m - \nabla v)
	\,
	dx.
\end{align*}
Since $u_k \to u$ weakly in $V$ as $k \to \infty$, we obviously obtain
$$
	\lim_{m \to \infty}
	\int_{
		{\mathbb R}^n
	}
	(A (x, u, \nabla v) - A (x, v, \nabla v))
	(\nabla u_m - \nabla u)
	\,
	dx
	=
	0.
$$
Taking into account the estimate
\begin{align*}
	&
	\left|
		\int_{
			{\mathbb R}^n
		}
		(A (x, u_m, \nabla v) - A (x, u, \nabla v))
		(\nabla u_m - \nabla v)
		\,
		dx
	\right|
	\\
	&
	\quad
	{}
	\le
	\left(
		\int_{
			{\mathbb R}^n
		}
		|A (x, u_m, \nabla v) - A (x, u, \nabla v)|^{p / (p - 1)}
		\,
		dx
	\right)^{(p - 1) / p}
%	\\
%	&
%	\quad
%	\phantom{{}\le}
%	{}
%	\times
	\left(
		\int_{
			{\mathbb R}^n
		}
		|\nabla u_m - \nabla v|^p
		\,
		dx
	\right)^{1 / p},
\end{align*}
where the first factor in the right-hand side tends to zero as $k \to \infty$ by Lebesgue's dominated convergence theorem and the second factor does not exceed some constant independent of $m$, we also obtain
$$
	\lim_{m \to \infty}
	\int_{
		{\mathbb R}^n
	}
	(A (x, u_m, \nabla v) - A (x, u, \nabla v))
	(\nabla u_m - \nabla v)
	\,
	dx
	=
	0.
$$
Thus, combining~\eqref{PT2.2.10}, \eqref{PT2.2.11}, and~\eqref{PT2.2.12}, we arrive at the inequality
$$
	({\mathcal F} - {\mathcal A} v, u - v)
	\ge
	\int_{
		{\mathbb R}^n
	}
	(A (x, u, \nabla v) - A (x, v, \nabla v))
	(\nabla u - \nabla v)
	\,
	dx
$$
putting in which $v = u - \lambda w$, $w \in V$, $\lambda \in (0, \infty)$, we have
\begin{equation}
	({\mathcal F} - {\mathcal A} (u - \lambda w), w)
	\ge
	\int_{
		{\mathbb R}^n
	}
	(A (x, u, \nabla u - \lambda \nabla w) - A (x, u - \lambda w, \nabla u - \lambda \nabla w))
	\nabla w
	\,
	dx.
	\label{PT2.2.13}
\end{equation}
Since~\eqref{PT2.2.14} is a continuous mapping on any finite-dimensional subspace of $V$, one can assert that
$$
	\lim_{\lambda \to +0}
	({\mathcal F} - {\mathcal A} (u - \lambda w), w)
	=
	({\mathcal F} - {\mathcal A} u, w).
$$
On the other hand, by Lebesgue's dominated convergence theorem,
$$
	\lim_{\lambda \to +0}
	\int_{
		{\mathbb R}^n
	}
	(A (x, u, \nabla u - \lambda \nabla w) - A (x, u - \lambda w, \nabla u - \lambda \nabla w))
	\nabla w
	\,
	dx
	=
	0;
$$
therefore, passing in~\eqref{PT2.2.13} to the limit as $\lambda \to +0$, we obtain
$$
	({\mathcal F} - {\mathcal A} u, w) \ge 0.	
$$
Since $w$ can be an arbitrary function from the space $V$, this immediately leads to~\eqref{PT2.2.9}.

Now, let us show that the linear functional ${\mathcal F}$ defined by the formula
$$
	({\mathcal F}, u)
	=
	\int_{
		{\mathbb R}^n
	}
	F (|x|)
	u
	\,
	dx
$$
is continuous on $V$.
By Holder's inequality, the estimate
\begin{equation}
	\left|
		\int_{
			{\mathbb R}^n
		}
		F (|x|)
		u
		\,
		dx
	\right|
	\le
	\left(
		\int_{
			{\mathbb R}^n
		}
		\frac{
			|u|^p
		}{
			|x|^p
		}
		\,
		dx
	\right)^{1 / p}
	\left(
		\int_{
			{\mathbb R}^n
		}
		|x|^{p / (p - 1)}
		F^{p / (p - 1)} (|x|)
		\,
		dx
	\right)^{(p - 1) / p}
	\label{PT2.2.18}
\end{equation}
is valid for all $u \in V$.
As $F$ is a non-increasing function on the interval $[0, \infty)$, we have
\begin{align}
	\int_{
		{\mathbb R}^n
	}
	|x|^{p / (p - 1)}
	F^{{p / (p - 1)}} (|x|)
	\,
	dx
	&
	=
	|S_1|
	\int_0^\infty
	r^{{p / (p - 1) + n - 1}}
	F^{p / (p - 1)} (r)
	\,
	dr
	\nonumber
	\\
	&
	=
	|S_1|
	\sum_{i = - \infty}^\infty
	\int_{2^i}^{2^{i+1}}
	r^{{p / (p - 1) + n - 1}}
	F^{p / (p - 1)} (r)
	\,
	dr
	\nonumber
	\\
	&
	\le
	C
	\sum_{i = - \infty}^\infty
	2^{i (p / (p - 1) + n)} 
	F^{p / (p - 1)} (2^i).
\label{PT2.2.19}
\end{align}
where $|S_1|$ is the $(n-1)$-dimensional volume of the unit sphere in ${\mathbb R}^n$.
At the same time,
$$
	\int_0^\infty
	r^{n (p - 1) / p}
	F (r)
	\,
	dr
	=
	\sum_{i = - \infty}^\infty
	\int_{2^{i-1}}^{2^i}
	r^{n (p - 1) / p}
	F (r)
	\,
	dr
	\ge
	C
	\sum_{i = 1}^\infty
	2^{i (1 + n (p - 1) / p)}
	F (2^i),
$$
whence it follows that
$$
	\left(
		\int_0^\infty
		r^{n (p - 1) / p}
		F (r)
		\,
		dr
	\right)^{p / (p - 1)}
	\ge
	C
	\sum_{i = - \infty}^\infty
	2^{i (p / (p - 1) + n)} 
	F^{p / (p - 1)} (2^i).
$$
Combining the last inequality with~\eqref{PT2.2.18} and~\eqref{PT2.2.19}, we obtain
$$
	\left|
		\int_{
			{\mathbb R}^n
		}
		F (|x|)
		u
		\,
		dx
	\right|
	\le
	C
	\left(
		\int_{
			{\mathbb R}^n
		}
		\frac{
			|u|^p
		}{
			|x|^p
		}
		\,
		dx
	\right)^{1 / p}
	\int_0^\infty
	r^{n (p - 1) / p}
	F (r)
	\,
	dr,
$$
whence in accordance with~\eqref{PT2.2.3} it follows that
$$
	\left|
		\int_{
			{\mathbb R}^n
		}
		F (|x|)
		u
		\,
		dx
	\right|
	\le
	C
	\left(
		\int_{
			{\mathbb R}^n
		}
		|\nabla u|^p
		\,
		dx
	\right)^{1 / p}
	\int_0^\infty
	r^{n (p - 1) / p}
	F (r)
	\,
	dr
$$
for all $u \in V$. 

Thus, there exists a solution of~\eqref{PT2.2.9} which is obviously a solution of~\eqref{T2.2.2} belonging to the space $V$. Let us show that it satisfies condition~\eqref{1.6}. 
%Indeed, let $u \in V$ be a solution of~\eqref{PT2.2.9}.
We denote
$$
	u_- (x)
	=
	\left\{
		\begin{aligned}
			&
			u (x),
			&
			u (x) < 0,
			\\
			&
			0,
			&
			u (x) \ge 0.
		\end{aligned}
	\right.
$$
It is easy to see that $u_- \in V$ and, in addition,
$$
	\nabla u_- (x)
	=
	\left\{
		\begin{aligned}
			&
			\nabla u (x),
			&
			u (x) < 0,
			\\
			&
			0,
			&
			u (x) \ge 0.
		\end{aligned}
	\right.
$$
Taking into account~\eqref{PT2.2.9}, we have
$$
	\int_{
		{\mathbb R}^n
	}
	A (x, \nabla u) \nabla u_-
	\,
	dx
	=
	\int_{
		{\mathbb R}^n
	}
	F (|x|) 
	u_-
	\,
	dx
	\le
	0.
$$
Since
$$
	\int_{
		{\mathbb R}^n
	}
	A (x, \nabla u) \nabla u_-
	\,
	dx
	=
	\int_{
		{\mathbb R}^n
	}
	A (x, \nabla u_-) \nabla u_-
	\,
	dx
$$
and, moreover,
$$
	C_1
	\int_{
		{\mathbb R}^n
	}
	|\nabla u_-|^p
	\,
	dx
	\le
	\int_{
		{\mathbb R}^n
	}
	A (x, \nabla u_-) \nabla u_-
	\,
	dx
$$
in accordance with~\eqref{1.3}, the function $u_-$ is a constant almost everywhere in ${\mathbb R}^n$. By~\eqref{PT2.2.2}, this constant can only be equal to zero. From the above, we can conclude that $u (x) \ge 0$ for almost all $x \in {\mathbb R}^n$. However, in this case, failure to satisfy condition~\eqref{1.6} also contradicts relation~\eqref{PT2.2.2}.
\end{proof}

\begin{proof}[Proof of Theorem~\ref{T2.1}]
We denote by $C$ various positive constants that can depend only on $p$, $n$, $\varepsilon$ and the ellipticity constants $C_1$ and $C_2$.

Without loss of generality, it can be assumed that the function $f$ on the right in~\eqref{1.1} is positive on the interval $(0, \varepsilon)$; otherwise we replace $f$ by
$$
	\tilde f (t)
	=
	\max
	\{
		f (t),
		t^{1 + n (p - 1) / (n - p)}
	\}.
$$

Let us put
$$
	F (r)
	=
	f
	\left(
		\varepsilon
		\left(
			1
			+
			\frac{r}{\delta}
		\right)^{- (n - p) / (p - 1)}
	\right),
$$
where the real number $\delta > 0$ will be chosen later.
Making the change of variables $t = \varepsilon (1 + r / \delta)^{- (n - p) / (p - 1)}$, we obtain
\begin{align}
	\int_0^\infty
	r^{n-1}
	F (r)
	\,
	dr
	&
	=
	\int_0^\infty
	r^{n - 1}
	f
	\left(
		\varepsilon
		\left(
			1
			+
			\frac{r}{\delta}
		\right)^{- (n - p) / (p - 1)}
	\right)
	dr
	\nonumber
	\\
	&
	=
	\frac{p - 1}{n - p}
	\frac{\delta^n}{\varepsilon}
	\int_0^\varepsilon
	\left(
		\left(
			\frac{\varepsilon}{t}
		\right)^{(p - 1) / (n - p)}
		-
		1
	\right)^{n - 1}
	\left(
		\frac{\varepsilon}{t}
	\right)^{(p - 1) / (n - p) + 1}
	f (t)
	\,
	dt
	\nonumber
	\\
	&
	\le
	\frac{p - 1}{n - p}
	\varepsilon^{n (p - 1) / (n - p)}
	\delta^n
	\int_0^\varepsilon
	\frac{
		f (t)
		\,
		dt
	}{
		t^{1 + n (p - 1) / (n - p)}
	},
	\label{PT2.1.1}
\end{align}
whence in accordance with~\eqref{T2.1.1} it follows that
\begin{equation}
	\int_0^\infty
	r^{n - 1}
	F (r)
	\,
	dr
	<
	\infty.
	\label{T2.2.1old}
\end{equation}
Since $n > p$, one can obviously assert that
$$
	n - 1 
	> 
	\frac{n (p - 1)}{p};
$$ 
therefore,~\eqref{T2.2.1old} implies~\eqref{T2.2.1}.
Thus, by Theorem~\ref{T2.2}, there exists a solution of~\eqref{T2.2.2} for which~\eqref{1.6} is valid. Let us show that this solution satisfies the estimate
\begin{equation}
	u (x)
	\le
	C
	\delta^{n / (p - 1)}
	|x|^{- (n - p) / (p - 1)}
	\left(
		\int_0^\varepsilon
		\frac{
			f (t)
			\,
			dt
		}{
			t^{1 + n (p - 1) / (n - p)}
		}
	\right)^{1 / (p - 1)}
	\label{PT2.1.2}
\end{equation}
for almost all $x \in {\mathbb R}^n$ and, moreover,
\begin{equation}
	\lim_{\delta \to +0}
	\operatorname*{ess\,sup}\limits_{
		{\mathbb R}^n
	}
	u
	=
	0.
	\label{PT2.1.3}
\end{equation}
Indeed, taking into account~\cite[Corollary~4.13]{KM}, we have
\begin{equation}
	u (x)
	\le
	C
	\int_0^\infty
	\left(
		\frac{
			\mu (B_r^x)
		}{
			r^{n - p}
		}
	\right)^{1 / (p - 1)}
	\frac{
		d r
	}{
		r
	}
	\label{PT2.1.4}
\end{equation}
for almost all $x \in {\mathbb R}^n$, where
$$
	\mu (\omega)
	=
	\int_\omega
	F (|x|)
	\,
	dx
$$
is a measure with the density $F (|x|)$ of a Lebesgue measurable set $\omega \subset {\mathbb R}^n$.

Representing~\eqref{PT2.1.4} in the form
\begin{equation}
	u (x)
	\le
	\int_0^{|x| / 2}
	\left(
		\frac{
			\mu (B_r^x)
		}{
			r^{n - p}
		}
	\right)^{1 / (p - 1)}
	\frac{
		d r
	}{
		r
	}
	+
	\int_{|x| / 2}^\infty
	\left(
		\frac{
			\mu (B_r^x)
		}{
			r^{n - p}
		}
	\right)^{1 / (p - 1)}
	\frac{
		d r
	}{
		r
	},
	\label{PT2.1.5}
\end{equation}
we estimate the first summand in the right-hand side of~\eqref{PT2.1.5}.
Since $F$ is a non-increasing function, the inequality
$$
	\mu (B_r^x)
	=
	\int_{B_r^x}
	F (|y|)
	\,
	dy
	\le
	|S_1|
	r^n
	F 
	\left( 
		\frac{|x|}{2} 
	\right)
$$
holds for all $r \in (0, |x| / 2)$. Consequently, 
$$
	\int_0^{|x| / 2}
	\left(
		\frac{
			\mu (B_r^x)
		}{
			r^{n - p}
		}
	\right)^{1 / (p - 1)}
	\frac{
		d r
	}{
		r
	}
	\le
	C
	|x|^{p / (p - 1)} 
	F^{1 / (p - 1)} 
	\left( 
		\frac{|x|}{2} 
	\right),
$$
whence in accordance with the fact that
$$
	F
	\left( 
		\frac{|x|}{2} 
	\right)
	\le
	n
	\left(
		\frac{|x|}{2}
	\right)^{- n}
	\int_0^{|x| / 2}
	r^{n - 1}
	F (r)
	\,
	dr	
$$
we obtain
$$
	\int_0^{|x| / 2}
	\left(
		\frac{
			\mu (B_r^x)
		}{
			r^{n - p}
		}
	\right)^{1 / (p - 1)}
	\frac{
		d r
	}{
		r
	}
	\le
	C
	|x|^{- (n - p) / (p - 1)}
	\left(
		\int_0^\infty
		r^{n - 1}
		F (r)
		\,
		dr	
	\right)^{1 / (p - 1)}
$$
for all $x \in {\mathbb R}^n \setminus \{ 0 \}$.
According to~\eqref{PT2.1.1}, this leads to the inequality
$$
	\int_0^{|x| / 2}
	\left(
		\frac{
			\mu (B_r^x)
		}{
			r^{n - p}
		}
	\right)^{1 / (p - 1)}
	\frac{
		d r
	}{
		r
	}
	\le
	C
	\delta^{n / (p - 1)}
	|x|^{- (n - p) / (p - 1)}
	\left(
		\int_0^\varepsilon
		\frac{
			f (t)
			\,
			dt
		}{
			t^{1 + n (p - 1) / (n - p)}
		}
	\right)^{1 / (p - 1)}
$$
for all $x \in {\mathbb R}^n \setminus \{ 0 \}$.

Now, we estimate the second summand in the right-hand side of~\eqref{PT2.1.5}. It is easy to see that
$$
	\int_{|x| / 2}^\infty
	\left(
		\frac{
			\mu (B_r^x)
		}{
			r^{n - p}
		}
	\right)^{1 / (p - 1)}
	\frac{
		d r
	}{
		r
	}
	\le
	C
	|x|^{- (n - p) / (p - 1)}
	\mu^{1 / (p - 1)} ({\mathbb R}^n)
$$
with
\begin{equation}
	\mu ({\mathbb R}^n)
	=
	|S_1|
	\int_0^\infty
	r^{n - 1}
	F (r)
	\,
	dr.
	\label{PT2.1.7}
\end{equation}
As above, by $|S_1|$ we denote the $(n - 1)$-dimensional volume of the unit sphere in ${\mathbb R}^n$.
Combining the last inequality with~\eqref{PT2.1.1}, we obtain
$$
	\int_{|x| / 2}^\infty
	\left(
		\frac{
			\mu (B_r^x)
		}{
			r^{n - p}
		}
	\right)^{1 / (p - 1)}
	\frac{
		d r
	}{
		r
	}
	\le
	C
	\delta^{n / (p - 1)}
	|x|^{- (n - p) / (p - 1)}
	\left(
		\int_0^\varepsilon
		\frac{
			f (t)
			\,
			dt
		}{
			t^{1 + n (p - 1) / (n - p)}
		}
	\right)^{1 / (p - 1)}
$$
for all $x \in {\mathbb R}^n \setminus \{ 0 \}$.
Hence,~\eqref{PT2.1.5} implies~\eqref{PT2.1.2}.

Let us prove relation~\eqref{PT2.1.3}. From~\eqref{PT2.1.4} it follows that
$$
	u (x)
	\le
	C
	\left(
		\int_0^\delta
		\left(
			\frac{
				\mu (B_r^x)
			}{
				r^{n - p}
			}
		\right)^{1 / (p - 1)}
		\frac{
			d r
		}{
			r
		}
		+
		\int_\delta^\infty
		\left(
			\frac{
				\mu (B_r^x)
			}{
				r^{n - p}
			}
		\right)^{1 / (p - 1)}
		\frac{
			d r
		}{
			r
		}
	\right)
$$
for almost all $x \in {\mathbb R}^n$. As $F(|x|) \le f (\varepsilon)$ for all $x \in {\mathbb R}^n$, we have
$$
	\mu (B_r^x)
	=
	\int_{B_r^x}
	F (|y|)
	\,
	dy
	\le
	|S_1|
	r^n
	f (\varepsilon)
$$
for all $x \in {\mathbb R}^n$ and $ r \in (0, \infty)$; therefore,
$$
	\int_0^\delta
	\left(
		\frac{
			\mu (B_r^x)
		}{
			r^{n - p}
		}
	\right)^{1 / (p - 1)}
	\frac{
		d r
	}{
		r
	}
	\le
	C 
	f^{1 /(p - 1)} (\varepsilon)
	\delta^{p / (p - 1)}
$$
for all $x \in {\mathbb R}^n$.
At the same time,
$$
	\int_\delta^\infty
	\left(
		\frac{
			\mu (B_r^x)
		}{
			r^{n - p}
		}
	\right)^{1 / (p - 1)}
	\frac{
		d r
	}{
		r
	}
	\le
	\frac{p - 1}{n - p}
	\delta^{- (n - p) / (p - 1)}
	\mu^{1 / (p - 1)} ({\mathbb R}^n)
$$
for all $x \in {\mathbb R}^n$,
whence in accordance with~\eqref{PT2.1.1} and~\eqref{PT2.1.7} in follows that
$$
	\int_\delta^\infty
	\left(
		\frac{
			\mu (B_r^x)
		}{
			r^{n - p}
		}
	\right)^{1 / (p - 1)}
	\frac{
		d r
	}{
		r
	}
	\le
	C
	\delta^{p / (p - 1)}
	\left(
		\int_0^\varepsilon
		\frac{
			f (t)
			\,
			dt
		}{
			t^{1 + n (p - 1) / (n - p)}
		}
	\right)^{1 / (p - 1)}.
$$
Thus,
$$
	u (x)
	\le
	C
	\delta^{p / (p - 1)}
	\left(
		f^{1 / (p - 1)} (\varepsilon)
		+
		\left(
			\int_0^\varepsilon
			\frac{
				f (t)
				\,
				dt
			}{
				t^{1 + n (p - 1) / (n - p)}
			}
		\right)^{1 / (p - 1)}
	\right)
$$
for almost all $x \in {\mathbb R}^n$, whence we immediately arrive at~\eqref{PT2.1.3}.

It is obvious that
$$
	2^{- (n - p) / (p - 1)}
	\le
	\left(
		1
		+
		\frac{|x|}{\delta}
	\right)^{- (n - p) / (p - 1)}.
$$
for all $x \in B_\delta$.
On the other hand, in view of~\eqref{PT2.1.3}, there exists a real number $\delta_1 > 0$ such that
$$
	u (x)
	\le 
	\varepsilon
	2^{- (n - p) / (p - 1)}
$$
for all $0 < \delta \le \delta_1$ and for almost all $x \in {\mathbb R}^n$. Hence, taking $0 < \delta \le \delta_1$, we obtain
\begin{equation}
	u (x)
	\le
	\varepsilon
	\left(
		1
		+
		\frac{|x|}{\delta}
	\right)^{- (n - p) / (p - 1)}
	\label{PT2.1.8}
\end{equation}
for almost all $x \in B_\delta$.

It is also easy to see that
$$
	2^{- (n - p) / (p - 1)}
	\delta^{(n - p) / (p - 1)}
	|x|^{- (n - p) / (p - 1)}
	\le
	\left(
		1
		+
		\frac{|x|}{\delta}
	\right)^{- (n - p) / (p - 1)}
$$
for all $x \in {\mathbb R}^n \setminus B_\delta$.
In so doing, according to~\eqref{PT2.1.2}, there exists a real number $\delta_2 > 0$ such that
$$
	u (x)
	\le
	\varepsilon
	2^{- (n - p) / (p - 1)}
	\delta^{(n - p) / (p - 1)}
	|x|^{- (n - p) / (p - 1)}
$$
for all $0 < \delta \le \delta_2$ and for almost all $x \in {\mathbb R}^n$.
Consequently, taking $0 < \delta \le \delta_2$, we get that~\eqref{PT2.1.8} holds for almost all $x \in {\mathbb R}^n \setminus B_\delta$.

Taking
$$
	\delta = \min \{ \delta_1, \delta_2 \},
$$ 
we obviously obtain, that~\eqref{PT2.1.8} is valid for almost all $x \in {\mathbb R}^n$.
Since $f$ is a non-decreasing function on the interval $[0, \varepsilon]$, this yields
$$
	f
	\left(
		\varepsilon
		\left(
			1
			+
			\frac{|x|}{\delta}
		\right)^{- (n - p) / (p - 1)}
	\right)
	\ge
	f (u)
$$
Thus, $u$ is a solution of inequality~\eqref{1.1}.

Finally, by Remark~\ref{R2.1}, the function $u$ is positive almost everywhere in ${\mathbb R}^n$. Indeed, at the beginning of the proof of Theorem~\ref{T2.1} we assumed, without loss of generality, that $f$ is a positive function on the interval $(0, \varepsilon)$. This immediately implies that $F (|x|)$ is positive for all $x \in {\mathbb R}^n$.

\end{proof}

\end{document}